\title[Consecutive primes and Beatty sequences]
      {Consecutive primes and Beatty sequences}
\author[W.\ D.\ Banks]{William D.\ Banks}
\address{Department of Mathematics, 
         University of Missouri, 
         Columbia MO, USA.}
\email{bankswd@missouri.edu}
\author[Victor Z.\ Guo]{Victor Z.\ Guo}
\address{Department of Mathematics, 
         University of Missouri, 
         Columbia MO, USA.}
\email{zgbmf@mail.missouri.edu}
\date{\today}
\begin{document}

\begin{abstract}
Fix irrational numbers $\alpha,\hat\alpha>1$ of finite type
and real numbers $\beta,\hat\beta\ge 0$, and let
$\cB$ and $\hat\cB$ be the Beatty sequences
$$
\cB\defeq(\fl{\alpha m+\beta})_{m\in\NN}
\mand
\hat\cB\defeq(\lfloor\hat\alpha m+\hat\beta\rfloor)_{m\in\NN}.
$$
In this note, we study the distribution of pairs $(p,p^\sharp)$
of consecutive primes for which $p\in\cB$ and $p^\sharp\in\hat\cB$.
Under a strong (but widely accepted) form of the Hardy-Littlewood
conjectures, we show that
$$
\big|\{p\le x:p\in\cB\text{~and~}p^\sharp\in\hat\cB\}\big|
=(\alpha\hat\alpha)^{-1}\pi(x)
+O\big(x(\log x)^{-3/2+\eps}\big).
$$
\end{abstract}

\maketitle

\begin{quote}
\textbf{MSC Numbers:} 11N05; 11B83.
\end{quote}

\begin{quote}
\textbf{Keywords:} primes, Beatty sequence, consecutive, heuristics, Hardy-Littlewood.
\end{quote}

\newcommand{\tind}[1]{\ensuremath{\widetilde{\mathbf{1}}_{#1}}} 


\section{Introduction}
\label{sec:intro}

For any given real numbers $\alpha>0$ and  $\beta\ge 0$,
the associated (generalized) Beatty sequence is defined by
$$
\cB_{\alpha,\beta}\defeq\big(\fl{\alpha m+\beta}\big)_{m\in\NN},
$$
where $\fl{t}$ is the largest integer not exceeding $t$.
If $\alpha$ is irrational, it follows from a classical
exponential sum estimate of Vinogradov~\cite{Vino} that
$\cB_{\alpha,\beta}$ contains infinitely many prime numbers;
in fact, one has
$$
\#\big\{\text{prime~}p\le x:p\in\cB_{\alpha,\beta}\big\}\sim
\alpha^{-1}\pi(x)\qquad(x\to\infty),
$$
where $\pi(x)$ is the prime counting function.

Throughout this paper, we fix two (not necessarily distinct)
irrational numbers $\alpha,\hat\alpha>1$ and two
(not necessarily distinct) real numbers $\beta,\hat\beta\ge 0$,
and we denote
\begin{equation}
\label{eq:BBhat}
\cB\defeq\cB_{\alpha,\beta}\mand
\hat\cB\defeq\cB_{\hat\alpha,\hat\beta}.
\end{equation}
Our aim is to study the set of primes $p\in\cB$ for which
the next larger prime $p^\sharp$ lies in $\hat\cB$.  The results
we obtain are conditional, relying only on 
the \emph{Hardy-Littlewood conjectures}
in the following strong form.
Let $\cH$ be a finite subset of $\ZZ$, and let
$\ind{\PP}$ denote the indicator function of the primes.
The Hardy-Littlewood conjecture for $\cH$ asserts
that the estimate
\begin{equation}
\label{eq:HL}
\sum_{n\le x}\prod_{h\in\cH}\ind{\PP}(n+h)
=\fS(\cH)\int_2^x\frac{du}{(\log u)^{|\cH|}}
+O(x^{1/2+\eps})
\end{equation}
holds for any fixed $\eps>0$, where $\fS(\cH)$ is the
singular series given by
$$
\fS(\cH)\defeq\prod_p\bigg(1-\frac{|(\cH\bmod p)|}p\bigg)
\bigg(1-\frac1p\bigg)^{-|\cH|}.
$$
Our main result is the following.

\begin{theorem}
\label{thm:main}
Fix irrational numbers $\alpha,\hat\alpha>1$ of finite type
and real numbers $\beta,\hat\beta\ge 0$, and let
$\cB$ and $\hat\cB$ be the Beatty sequences given
by \eqref{eq:BBhat}.
For every prime $p$, let $p^\sharp$ denote the next larger prime.
Suppose that the Hardy-Littlewood conjecture~\eqref{eq:HL}
holds for every finite subset $\cH$ of $\ZZ$.
Then, for any fixed $\eps>0$, the counting function
$$
\pi(x;\cB,\hat\cB)\defeq
\big|\{p\le x:p\in\cB\text{~and~}p^\sharp\in\hat\cB\}\big|
$$
satisfies the estimate
$$
\pi(x;\cB,\hat\cB)=(\alpha\hat\alpha)^{-1}\pi(x)
+O\big(x(\log x)^{-3/2+\eps}\big),
$$
where the implied constant depends only on
$\alpha$, $\hat\alpha$ and $\eps$.
\end{theorem}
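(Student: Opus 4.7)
The plan is to combine the Hardy--Littlewood hypothesis with two complementary reductions: an inclusion--exclusion in the prime-gap variable $g=p^\sharp-p$ that expresses $\pi(x;\cB,\hat\cB)$ as a signed sum of Hardy--Littlewood-type tuple counts, and a Vaaler/Erd\H{o}s--Tur\'an approximation of the Beatty indicator functions $\ind{\cB},\ind{\hat\cB}$ (quantitatively strong because $\alpha,\hat\alpha$ are assumed of finite type).

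First I would decompose
$$
\pi(x;\cB,\hat\cB)=\sum_{g\ge 1}\sum_{S\subseteq[1,g-1]}(-1)^{|S|}\sum_{n\le x}\ind{\cB}(n)\,\ind{\hat\cB}(n+g)\prod_{j\in\cH_{g,S}}\ind{\PP}(n+j),
$$
with $\cH_{g,S}=\{0,g\}\cup S$, and truncate at $g\le T$ where $T$ is a suitable power of $\log x$; the tail $g>T$ will be controlled via \eqref{eq:HL} applied to tuples realizing long gaps (which are conjecturally very sparse). Writing $\ind{\cB}(n)=\alpha^{-1}+R_\alpha(n)$ and $\ind{\hat\cB}(n+g)=\hat\alpha^{-1}+R_{\hat\alpha}(n+g)$, where each remainder is sandwiched between Vaaler trigonometric polynomials of length $H$ with Fourier coefficients bounded by $\min(1/|h|,1/H)$, the ``constant $\times$ constant'' contribution is
$$
(\alpha\hat\alpha)^{-1}\sum_{g,S}(-1)^{|S|}\sum_{n\le x}\prod_{j\in\cH_{g,S}}\ind{\PP}(n+j).
$$
Since for every prime $n\le x$ the signed inner sum telescopes (by the very definition of inclusion--exclusion) to $\ind{\PP}(n)$, applying \eqref{eq:HL} to each tuple and recombining the singular series delivers the main term $(\alpha\hat\alpha)^{-1}\pi(x)$, at the cost of an aggregated Hardy--Littlewood error of size roughly $2^T T\cdot x^{1/2+\eps}$.

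The remaining ``Fourier tail'' contributions are oscillated tuple sums
$$
\sum_{n\le x}e(\theta n)\prod_{j\in\cH}\ind{\PP}(n+j),\qquad \theta=\alpha^{-1}h+\hat\alpha^{-1}h'\ne 0,
$$
which I would estimate by Abel summation against \eqref{eq:HL}: integration by parts kills the smooth main term $\fS(\cH)\int_2^x(\log u)^{-|\cH|}\,du$ down to $O(x/(|\theta|(\log x)^{|\cH|}))$, while the \eqref{eq:HL} error contributes $O(|\theta|\cdot x^{1/2+\eps})$ after integrating against $e(\theta t)$. Summing these estimates against the Vaaler coefficients over $|h|,|h'|\le H$, then over $g\le T$ and $S\subseteq[1,g-1]$, the final calculation requires calibrating $H$ and $T$ as powers of $\log x$ so that the discrepancy error $O(x/H)$, the combinatorial HL error, and the Fourier-tail error all fall below $O(x(\log x)^{-3/2+\eps})$. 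I anticipate the main obstacle will be squeezing out the additional $(\log x)^{1/2-\eps}$ beyond the na\"ive $\pi(x)/\log x$ scale: the oscillatory saving from each nonzero mode must be exploited all the way, and it is here that the finite-type hypothesis on both $\alpha$ and $\hat\alpha$ enters essentially.
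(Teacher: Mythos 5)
Your outline shares the broad skeleton of the paper's argument — truncate in the gap variable at a power of $\log x$, approximate the Beatty indicator functions by trigonometric polynomials, and feed the result into the Hardy--Littlewood input — and your observation that the ``constant $\times$ constant'' piece telescopes exactly to $\pi(x)$ (up to the tail $g>T$) is a correct and clean way to produce the main term.

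The fatal step is the direct expansion of $\prod_{0<t<g}\big(1-\ind{\PP}(n+t)\big)$ into bare tuple counts over all $S\subseteq[1,g-1]$ and the appeal to~\eqref{eq:HL} term by term. With $T\asymp(\log x)^c$ for some $c\ge 1$ (and one must take at least $T\gg\log x$ just to make the tail $g>T$ negligible), the number of subsets is $\sim 2^T=\exp\big(T\log 2\big)$, which is superpolynomial in $x$. The ``aggregated Hardy--Littlewood error of size roughly $2^TT\,x^{1/2+\eps}$'' that you acknowledge is therefore astronomically larger than the main term, not below the target $x(\log x)^{-3/2+\eps}$; the same explosion occurs on the Fourier-tail side once you sum the oscillatory bounds over $g$ and $S$. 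The paper avoids this entirely by \emph{not} expanding into raw tuple counts: following Lemke Oliver--Soundararajan, it first centers, writing $1-\ind{\PP}(n+t)=\nu(n)-\tind{\PP}(n+t)$ with $\tind{\PP}=\ind{\PP}-1/\log n$ and $\nu=1-1/\log n<1$, and invokes the modified conjecture~\eqref{eq:modHL} together with the modified singular series $\fS_0$. The decaying factor $\nu(u)^{h-1-|\cT|}$ and the identity $\fS_0(\{t\})=0$ confine the contribution to the pieces with $L=|\cA|+|\cT|\in\{0,2\}$; the $L\ge 3$ terms are shown to be $O(x/(\log x)^{5/2})$. Without this reorganization your signed sum does not converge in any controlled way.

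A second gap concerns the source of the extra $(\log x)^{1/2-\eps}$ saving. You attribute it to ``the oscillatory saving from each nonzero mode'' and to the finite-type hypothesis, but in the paper that saving comes entirely from the arithmetic side: from Lemma~\ref{lem:G0ests}, i.e.\ the Montgomery--Soundararajan square-root cancellation bounds $\sum_{t\le h}\fS_0(\{0,t\})\ll h^{1/2+\eps}$ and $\sum_{t_1<t_2\le h}\fS_0(\{t_1,t_2\})=-\tfrac12h\log h+\tfrac12Ah+O(h^{1/2+\eps})$, fed through the Mellin/Cahen machinery of Lemma~\ref{lem:RST}. These estimates are what make the $j=3,4,5$ pieces $O(x(\log x)^{-3/2+\eps})$. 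Your plan contains no mechanism playing this role. Finally, the finite-type hypothesis on $\alpha,\hat\alpha$ is used in the paper solely to control the discrepancy (Lemma~\ref{lem:discr_with_type}), i.e.\ the measure of the exceptional set where the smoothed $\Psi_a$ differs from $\psi_a$, giving $V(x)\ll\Delta x+x^{1-1/\tau+o(1)}$ in~\eqref{eq:bound V(I,x)}; it is not needed to bound the oscillatory integrals $\int\e(\lambda u)\,du/(\nu(u)\log u)\ll|\lambda|^{-1}$, which hold for any nonzero real $\lambda$.
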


Our results are largely inspired by the
recent breakthrough paper of Lemke Oliver and
Soundararajan~\cite{LO-S}, which studies the surprisingly
erratic distribution of pairs of consecutive primes
amongst the $\phi(q)^2$ permissible reduced residue classes
modulo $q$.  In~\cite{LO-S} a conjectural explanation 
for this phenomenon is given which is based on the strong form
of the Hardy-Littlewood conjectures considered in this note, 
that is, under the hypothesis that the estimate
\eqref{eq:HL} holds for every finite subset $\cH$ of $\ZZ$.

\section{Preliminaries}

\subsection{Notation}

The notation $\llbracket t\rrbracket$ is used to denote the distance
from the real number $t$ to the nearest integer; that is,
$$
\llbracket t\rrbracket\defeq\min_{n\in\ZZ}|t-n|\qquad(t\in\RR).
$$
We denote by $\fl{t}$ and $\{t\}$ the greatest integer
$\le t$ and the fractional part of $t$, respectively.
We also write $\e(t)\defeq e^{2\pi it}$ for all $t\in\RR$, as usual.

Let $\PP$ denote the set of primes in $\NN$.
In what follows, the letter $p$ always denotes a prime number, and
$p^\sharp$ is used to denote the smallest
prime greater than $p$. In other words, $p$ and $p^\sharp$
are consecutive primes with $p^\sharp>p$. We also put
$$
\delta_p\defeq p^\sharp-p\qquad(p\in\PP).
$$

For an arbitrary set $\cS$, we use $\ind{\cS}$ to denote
its indicator function:
$$
\ind{\cS}(n)\defeq\begin{cases}
1&\quad\hbox{if $n\in\cS$,}\\
0&\quad\hbox{if $n\not\in\cS$.}\\
\end{cases}
$$

Throughout the paper, implied constants in symbols $O$, $\ll$
and $\gg$ may depend (where obvious) on the parameters
$\alpha,\hat\alpha,\eps$ but are absolute otherwise. For given
functions $F$ and $G$, the notations $F\ll G$, $G\gg F$ and
$F=O(G)$ are all equivalent to the statement that the inequality
$|F|\le c|G|$ holds with some constant $c>0$.

\subsection{Discrepancy}

We recall that the discrepancy $D(M)$ of a sequence of (not
necessarily distinct) real numbers $x_1,x_2,\ldots,x_M\in[0,1)$ is
defined by
\begin{equation}
\label{eq:descr_defn}
D(M)\defeq\sup_{\cI\subseteq[0,1)}
\left|\frac{V(\cI,M)}{M}-|\cI|\,\right|,
\end{equation}
where the supremum is taken over all intervals $\cI=(b,c)$ contained
in $[0,1)$, the quantity $V(\cI,M)$ is the number of positive
integers $m\le M$ such that $x_m\in\cI$,
and $|\cI|=c-b$ is the length of $\cI$.

For any irrational number $a$ we define its type $\tau=\tau(a)$
by the relation
$$
\tau\defeq\sup\big\{t\in\RR:\liminf\limits_{n\to\infty}
~n^t\,\llbracket a n\rrbracket=0\big\}.
$$
Using Dirichlet's approximation theorem, one sees
that $\tau\ge 1$ for every irrational number $a$. Thanks to
the work of
Khinchin~\cite{Khin} and Roth~\cite{Roth1,
Roth2} it is known that $\tau=1$ for almost all real numbers (in
the sense of the Lebesgue measure) and for all irrational
algebraic numbers, respectively.

For a given irrational number $a$,
it is well known that the sequence
of fractional parts $\{a\},\{2a\},\{3a\},\,\ldots\,,$
is uniformly distributed modulo one (see, for example,
\cite[Example~2.1, Chapter~1]{KuNi}). When $a$ is of finite
type, this statement can be made more precise. 
By~\cite[Theorem~3.2, Chapter~2]{KuNi} we have the following
result.

\begin{lemma}
\label{lem:discr_with_type}  Let $a$ be a fixed irrational
number of finite type $\tau$.  For every $b\in\RR$ the
discrepancy $D_{a,b}(M)$ of the sequence of fractional parts
$(\{am+b\})_{m=1}^M$ satisfies the bound
$$
D_{a,b}(M)\le M^{-1/\tau+o(1)}\qquad(M\to\infty),
$$
where the function implied by $o(\cdot)$ depends only on $a$.
\end{lemma}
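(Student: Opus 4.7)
The plan is to deduce the discrepancy bound from the classical Erd\H{o}s--Tur\'an inequality, which states that for any positive integer $H$,
\begin{equation*}
D_{a,b}(M) \ll \frac{1}{H} + \frac{1}{M}\sum_{h=1}^{H}\frac{1}{h}\bigg|\sum_{m=1}^{M}\e\bigl(h(am+b)\bigr)\bigg|.
\end{equation*}
The inner exponential sum is a geometric progression in $m$; taking its modulus, the phase $\e(hb)$ has absolute value $1$ and the remaining geometric sum is $\ll 1/\llbracket ha\rrbracket$. Thus everything is reduced to controlling the weighted Diophantine sum $\sum_{h=1}^{H}1/(h\llbracket ha\rrbracket)$.

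The second and main step is the averaged Diophantine estimate: for $a$ of finite type $\tau$,
\begin{equation*}
\sum_{h=1}^{H}\frac{1}{\llbracket ha\rrbracket}\ll H^{\tau+o(1)}\qquad(H\to\infty),
\end{equation*}
from which partial summation yields $\sum_{h=1}^{H}1/(h\llbracket ha\rrbracket)\ll H^{\tau-1+o(1)}$. To prove this I would bring in the continued-fraction expansion $a=[a_0;a_1,a_2,\ldots]$ with convergent denominators $q_k$. Using $\llbracket q_ka\rrbracket\asymp 1/q_{k+1}$ together with the recursion $q_{k+1}=a_{k+1}q_k+q_{k-1}$, the finite-type hypothesis translates into the growth restriction $a_{k+1}\ll q_k^{\tau-1+o(1)}$ on the partial quotients. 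One then partitions $1\le h\le H$ into blocks $q_k\le h<q_{k+1}$; inside each block, the three-distance theorem (or equivalently the Ostrowski representation of $h$ with respect to the $q_k$) provides enough pointwise control on $\llbracket ha\rrbracket$ for the block contributions to be summed and telescoped into the required bound.

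Substituting the estimate back into Erd\H{o}s--Tur\'an gives $D_{a,b}(M)\ll 1/H + H^{\tau-1+o(1)}/M$, and choosing $H=\lfloor M^{1/\tau}\rfloor$ balances the two terms to produce $D_{a,b}(M)\ll M^{-1/\tau+o(1)}$; the translation by $b$ enters only through the harmless unimodular factor $\e(hb)$ and so is invisible in the final bound. The main obstacle is precisely the averaged estimate in step two: merely inserting the pointwise inequality $\llbracket ha\rrbracket\gg h^{-\tau-\eps}$ term by term would yield only the weaker exponent $-1/(\tau+1)$, so genuine Diophantine structure---the continued-fraction convergents, or an equivalent combinatorial device---must be exploited to extract the extra saving, and this is the step where the finite-type hypothesis on $a$ is indispensable.
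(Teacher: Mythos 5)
Your proposal is correct and follows exactly the standard argument behind the cited reference: the paper itself offers no proof, simply invoking Kuipers--Niederreiter~\cite[Theorem~3.2, Chapter~2]{KuNi}, whose proof proceeds by the Erd\H os--Tur\'an inequality together with the type-$\tau$ estimate for $\sum_{h\le H}1/(h\llbracket ha\rrbracket)$ and the balancing choice $H\asymp M^{1/\tau}$, precisely as you outline (with the harmless shift by $b$ absorbed into a unimodular factor, as you note).
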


\subsection{Indicator function of a Beatty sequence}

As in \S\ref{sec:intro} we fix (possibly equal) 
irrational numbers $\alpha,\hat\alpha>1$ and (possibly equal)
real numbers $\beta,\hat\beta\ge 0$, and we set
$$
\cB\defeq\cB_{\alpha,\beta}\mand
\hat\cB\defeq\cB_{\hat\alpha,\hat\beta}.
$$
In what follows we denote
$$
a\defeq\alpha^{-1},\qquad
\hat a\defeq\hat\alpha^{-1},\qquad
b\defeq\alpha^{-1}(1-\beta)\mand
\hat b\defeq\hat\alpha^{-1}(1-\hat\beta).
$$
It is straightforward to show that
\begin{equation}
\label{eq:ind-Beatty}
\ind{\cB}(m)=\psi_a(am+b)\mand
\ind{\hat\cB}(m)=\psi_{\hat a}(\hat am+\hat b)\qquad(m\in\NN),
\end{equation}
where for any $t\in(0,1)$ we use $\psi_t$ to denote the
periodic function of period one defined by
$$
\psi_t(x) \defeq \left\{  \begin{array}{ll}
1& \quad \hbox{if $0<\{x\}\le t$}, \\
0& \quad \mbox{if $t<\{x\}<1$ or $\{x\}=0$}.
\end{array} \right.
$$

\subsection{Modified Hardy-Littlewood conjecture}
For their work on primes in short intervals,
Montgomery and Soundararajan \cite{MontSound} have introduced
the modified singular series
$$
\fS_0(\cH)\defeq\sum_{\cT\subseteq\cH}
(-1)^{|\cH\setminus\cT|}\fS(\cT),
$$
for which one has the relation
$$
\fS(\cH)=\sum_{\cT\subseteq\cH}\fS_0(\cT).
$$
Note that $\fS(\varnothing)=\fS_0(\varnothing)=1$.
The Hardy-Littlewood conjecture \eqref{eq:HL} can be reformulated
in terms of the modified singular series as follows:
\begin{equation}
\label{eq:modHL}
\sum_{n\le x}\prod_{h\in\cH}
\bigg(\ind{\PP}(n+h)-\frac{1}{\log n}\bigg)
=\fS_0(\cH)\int_2^x\frac{du}{(\log u)^{|\cH|}}
+O(x^{1/2+\eps}).
\end{equation}

\begin{lemma}
\label{lem:G0ests}
We have
\begin{align*}
\sum_{\substack{1\le t\le h-1}}\fS_0(\{0,t\})
&\ll h^{1/2+\eps},\\
\sum_{\substack{1\le t\le h-1}}\fS_0(\{t,h\})
&\ll h^{1/2+\eps},\\
\sum_{\substack{1\le t_1<t_2\le h-1}}\fS_0(\{t_1,t_2\})
&=-\tfrac12 h\log h+\tfrac12Ah+O(h^{1/2+\eps}),
\end{align*}
where $A\defeq 2-C_0-\log 2\pi$ and $C_0$
denotes the Euler-Mascheroni constant.
\end{lemma}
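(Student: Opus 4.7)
My plan is to reduce all three sums to classical mean-value estimates for the singular series $\fS(\{0,d\})$, due to Gallagher and to Montgomery--Soundararajan. Two simple observations drive the reduction. First, applying the defining formula for $\fS_0$ together with $\fS(\varnothing)=\fS(\{a\})=1$ yields, for any distinct integers $a,b$,
$$
\fS_0(\{a,b\})=\fS(\{a,b\})-1.
$$
Second, because $\fS$ (and hence $\fS_0$) is invariant under a common translation of the elements of its argument, one has $\fS_0(\{a,b\})=\fS_0(\{0,b-a\})$. The substitution $s=h-t$ then shows that the second sum in the lemma equals the first, so only the first and third claims require attention.

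For the first sum, the identities above give
$$
\sum_{1\le t\le h-1}\fS_0(\{0,t\})=\sum_{1\le t\le h-1}\bigl(\fS(\{0,t\})-1\bigr),
$$
and I would invoke the classical mean-value
$$
\sum_{d=1}^{h}\fS(\{0,d\})=h+O(h^{1/2+\eps}),
$$
originating with Gallagher and later sharpened by Montgomery--Vaughan, Friedlander--Goldston, and others through a Ramanujan-sum/Dirichlet-series analysis of $\fS(\{0,d\})$. This gives the $O(h^{1/2+\eps})$ bound immediately.

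For the third sum, translation invariance rewrites
$$
\sum_{1\le t_1<t_2\le h-1}\fS_0(\{t_1,t_2\})
=\tfrac12\sum_{\substack{1\le j_1,j_2\le h-1\\ j_1\ne j_2}}\fS_0(\{j_1,j_2\}),
$$
so I would apply the Montgomery--Soundararajan \cite{MontSound} pair-sum asymptotic
$$
\sum_{\substack{1\le j_1,j_2\le H\\ j_1\ne j_2}}\fS_0(\{j_1,j_2\})=-H\log H+AH+O(H^{1/2+\eps})
$$
with $H=h-1$. Writing $(h-1)\log(h-1)=h\log h-\log h+O(1)$ and absorbing the $O(\log h)$ term into $O(h^{1/2+\eps})$ yields the stated asymptotic with the prescribed constant $A=2-C_0-\log 2\pi$.

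The main obstacle is the refined pair-sum asymptotic, whose constant $A$ must match the one appearing in the lemma. This is precisely the arithmetic content underlying Montgomery--Soundararajan's analysis of primes in short intervals, which isolates $A$ through a careful Dirichlet-series computation involving sums of Ramanujan-type characters over reduced residue systems; my plan is to quote this result rather than reprove it.
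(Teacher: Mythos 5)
Your plan is correct and gives all three estimates, but it follows a genuinely different route for the first two.  For the sum $\sum_{1\le t\le h-1}\fS_0(\{0,t\})$ you reduce via $\fS_0(\{0,t\})=\fS(\{0,t\})-1$ and then quote a Gallagher/Montgomery--Vaughan type mean-value result $\sum_{d\le h}\fS(\{0,d\})=h+O(h^{1/2+\eps})$, an independent external input.  The paper avoids this: it sets $B$ and $C$ for the two single sums, $D_\pm=\sum_{1\le t_1<t_2\le h\pm1}\fS_0(\{t_1,t_2\})$, observes the exact combinatorial identity $\fS_0(\{0,h\})+B+C+D_-=D_+$ together with $B=C$, and then applies the \emph{same} Montgomery--Soundararajan estimate \cite[Eq.~(16)]{MontSound} to both $D_+$ and $D_-$; since the two main terms coincide up to $O(h^{1/2+\eps})$, the difference already yields $B+C\ll h^{1/2+\eps}$ once one adds the trivial bound $\fS_0(\{0,h\})\ll\log\log h$.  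Thus the paper's proof is fully self-contained on a single citation, while yours uses two, though both work.  For the third sum you and the paper both invoke the Montgomery--Soundararajan pair-sum asymptotic, and your handling of the shift from $H=h-1$ to $h$ is fine.  One small mislabeling: the identity $\sum_{1\le t_1<t_2\le h-1}\fS_0(\{t_1,t_2\})=\tfrac12\sum_{j_1\ne j_2}\fS_0(\{j_1,j_2\})$ follows from the symmetry of $\fS_0$ as a function of an unordered set, not from translation invariance; you do need translation invariance for $B=C$, but not at that step.
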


\begin{proof}
Let us denote
$$
B\defeq\sum_{\substack{1\le t\le h-1}}\fS_0(\{0,t\}),\qquad
C\defeq\sum_{\substack{1\le t\le h-1}}\fS_0(\{t,h\}),
$$
and
$$
D_\pm\defeq
\sum_{\substack{1\le t_1<t_2\le h\pm 1}}\fS_0(\{t_1,t_2\})
$$
for either choice of the sign $\pm$.  Clearly,
$$
\fS_0(\{0,h\})+B+C+D_-=D_+\mand
B=\sum_{\substack{1\le t\le h-1}}\fS_0(\{0,h-t\})=C.
$$
From \cite[Equation~(16)]{MontSound} we derive the estimates
$$
D_\pm=-\tfrac12 h\log h+\tfrac12Ah+O(h^{1/2+\eps}).
$$
Using the trivial bound $\fS_0(\{0,h\})\ll\log\log h$ and
putting everything together, we finish the proof.
\end{proof}

\subsection{Technical lemmas}

Let $\nu(u)\defeq 1-1/\log u$.  Note that $\nu(u)\asymp 1$
for $u\ge 3$.

\begin{lemma}
\label{lem:Dom}
Let $c>0$ be a constant, and suppose that
$f$ is a function such that $|f(h)|\le h^c$ for all $h\ge 1$.
Then, uniformly for $3\le u\le x$ and $\lambda\in\RR$ we have
$$
\sum_{\substack{h\le(\log x)^3\\2\,\mid\,h}}
f(h)\nu(u)^h\e(\lambda h)
=\sum_{\substack{h\ge 1\\2\,\mid\,h}}
f(h)\nu(u)^h\e(\lambda h)+O_c(x^{-1}).
$$
\end{lemma}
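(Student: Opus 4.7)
The task is to show that truncating the sum at $h=(\log x)^3$ introduces only an $O(x^{-1})$ error, uniformly in $u\in[3,x]$ and $\lambda\in\RR$. The difference between the two sums is the tail
$$
R\defeq\sum_{\substack{h>(\log x)^3\\2\,\mid\,h}}f(h)\nu(u)^h\e(\lambda h),
$$
so the plan is just to bound $|R|$ directly.

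First I would use the trivial pointwise bounds $|f(h)|\le h^c$ and $|\e(\lambda h)|=1$ to reduce to
$$
|R|\le\sum_{h>(\log x)^3}h^c\nu(u)^h.
$$
Next, the key observation is a uniform bound on $\nu(u)$: since $u\mapsto\nu(u)=1-1/\log u$ is increasing on $[3,\infty)$ and $u\le x$, we have $\nu(u)\le 1-1/\log x$, so $\nu(u)^h\le(1-1/\log x)^h\le\exp(-h/\log x)$ for every $h\ge 1$. This is the one place where the hypothesis $u\le x$ is used, and it converts the problem into an entirely elementary geometric tail estimate that no longer depends on $u$.

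It then suffices to show that
$$
\sum_{h>(\log x)^3}h^c\exp(-h/\log x)=O_c(x^{-1}).
$$
To handle this, I would write $h=(\log x)^3+j$ for $j\ge 0$, apply the elementary inequality $(a+b)^c\ll_c a^c+b^c$, and factor out $\exp(-(\log x)^3/\log x)=\exp(-(\log x)^2)$. The residual geometric sum $\sum_{j\ge 0}\exp(-j/\log x)$ contributes a factor of order $\log x$, and the polynomial piece contributes at most a fixed power of $\log x$, so the whole expression is bounded by $\exp(-(\log x)^2)\cdot(\log x)^{O_c(1)}$. Since $\exp(-(\log x)^2)$ decays faster than any negative power of $x$, this is $O_c(x^{-1})$ as required.

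I do not expect any real obstacle here: the content is purely a tail estimate for a geometric-type series, and the only subtlety is to notice that the worst case of $\nu(u)$ on $[3,x]$ is at $u=x$, which is precisely what makes the error $x^{-1}$ rather than something weaker. The restriction to even $h$ plays no role in the bound and is inherited trivially.
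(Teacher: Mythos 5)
Your proof is correct and takes essentially the same route as the paper: both hinge on the uniform bound $\nu(u)^h\le e^{-h/\log x}$ for $3\le u\le x$ (the paper gets it by writing $\nu(u)^h=e^{-h/H}$ with $H\le\log u\le\log x$; you get it from $1-t\le e^{-t}$), after which the tail beyond $(\log x)^3$ is negligible. The only difference is in the bookkeeping of that tail: the paper replaces $e^{-h/\log x}$ by $e^{-h^{2/3}}$ for $h>(\log x)^3$ and peels off $x^{-1}$ via $e^{-h^{1/3}}<x^{-1}$, leaving an absolutely convergent series independent of $x$, whereas you estimate $\sum_{h>(\log x)^3}h^c e^{-h/\log x}$ directly by shifting the index and factoring out $\exp(-(\log x)^2)$ — both yield $O_c(x^{-1})$ with room to spare.
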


\begin{proof}
Write $\nu(u)^h=e^{-h/H}$ with $H\defeq -(\log\nu(u))^{-1}$.
Since $H\le\log u$ for $u\ge 3$, for any $h>(\log x)^3$ we have
$h/H\ge h^{2/3}$ as $u\le x$; therefore,
\begin{align*}
\bigg|\sum_{\substack{h>(\log x)^3\\2\,\mid\,h}}
f(h)\nu(u)^h\e(\lambda h)\bigg|
\le\sum_{h>(\log x)^3}h^ce^{-h^{2/3}}
\le x^{-1}\sum_{h>(\log x)^3}h^ce^{h^{1/3}-h^{2/3}}
\ll_c x^{-1},
\end{align*}
and the result follows.
\end{proof}

The next statement is an analogue of \cite[Proposition~2.1]{LO-S}
and is proved using similar methods.

\begin{lemma}
\label{lem:RST}
Fix $\theta\in[0,1]$ and $\vartheta=0$ or $1$.
For all $\lambda\in\RR$ and $u\ge 3$, let
\begin{align*}
R_{\theta,\vartheta;\lambda}(u)&\defeq
\sum_{\substack{h\ge 1\\2\,\mid\,h}}
h^\theta(\log h)^\vartheta\nu(u)^h\e(\lambda h),\\
S_\lambda(u)&\defeq
\sum_{\substack{h\ge 1\\2\,\mid\,h}}
\fS_0(\{0,h\})\nu(u)^h\e(\lambda h).
\end{align*}
When $\lambda=0$ we have the estimates
\begin{align*}
R_{\theta,0;0}(u)&=\tfrac12\Gamma(1+\theta)
(\log u)^{1+\theta}+O(1),\\
R_{\theta,1;0}(u)&=\tfrac12(\log 2)\Gamma(1+\theta)
(\log u)^{1+\theta}+O(1),\\
S_0(u)&=\tfrac12\log u-\tfrac12\log\log u+O(1).
\end{align*}
On the other hand, if $\lambda$ is such that
$|\lambda|\ge(\log u)^{-1}$, then
$$
\max\big\{|R_{\theta,\vartheta;\lambda}(u)|,
|S_\lambda(u)|\big\}\ll\lambda^{-4}.
$$
\end{lemma}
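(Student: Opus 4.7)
The plan is to write $\nu(u)^h = e^{-h/H}$ with $H \defeq -(\log\nu(u))^{-1}$, so that $H = \log u + O(1)$ as $u\to\infty$, and to treat the two regimes $\lambda = 0$ and $|\lambda| \ge (\log u)^{-1}$ separately.

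For the $\lambda = 0$ estimates of the $R$-sums, I would first substitute $h = 2k$ and (in the $\vartheta = 1$ case) split $\log(2k) = \log 2 + \log k$. This reduces everything to the asymptotic evaluation of $\sum_{k\ge 1} k^\theta e^{-2k/H}$ and $\sum_{k\ge 1} k^\theta \log k\cdot e^{-2k/H}$ as $H \to \infty$. These follow from standard Mellin inversion: using $e^{-x} = (2\pi i)^{-1}\int_{(c)}\Gamma(s)x^{-s}\,ds$ and shifting the contour past the pole of $\zeta(s-\theta)$ at $s = 1+\theta$, one extracts the main term $\Gamma(1+\theta)(H/2)^{1+\theta}$ (and its $\theta$-derivative in the logarithmic variant). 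After multiplying by $2^\theta$, replacing $H$ by $\log u$, and collecting the pieces, the stated main terms emerge up to bounded error.

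For $S_0(u)$, the natural approach is Abel summation with partial sums $T(h) \defeq \sum_{2\le t\le h,\,2\mid t}\fS_0(\{0,t\})$. Since $\fS_0(\{0,t\}) = -1$ for odd $t$, the first bullet of Lemma~\ref{lem:G0ests} yields $T(h) = h/2 + O(h^{1/2+\eps})$, and summation by parts gives $S_0(u) = (1-\nu(u)^2)\sum_{2\mid h} T(h)\nu(u)^h$. The $h/2$ contribution evaluates to $\nu(u)^2/(1-\nu(u)^2) = \tfrac12\log u + O(1)$, while the secondary $-\tfrac12\log\log u$ term is extracted by sharpening $T(h)$; here the Ramanujan-Fourier expansion $\fS_0(\{0,h\}) = \sum_{q\ge 2}(\mu(q)^2/\phi(q)^2)c_q(h)$ (with $c_q$ the Ramanujan sum) supplies the required additional structure.

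Finally, for $|\lambda|\ge(\log u)^{-1}$, set $w\defeq\nu(u)^2 e(2\lambda)$, so that $|1-w|\gg|\lambda|$. Each $R_{\theta,\vartheta;\lambda}(u)$ is a polylogarithm-type expression in $w$, and partial summation (or direct differentiation of $\sum_k w^k = w/(1-w)$) gives $|R_{\theta,\vartheta;\lambda}(u)|\ll |1-w|^{-1-\theta}(\log(1/|1-w|))^\vartheta \ll |\lambda|^{-2}\log(1/|\lambda|)$, well inside the required $|\lambda|^{-4}$. For $S_\lambda(u)$ I would again insert the Ramanujan expansion, interchange the sums over $q$ and $h$, and evaluate each inner sum $\sum_{2\mid h}c_q(h)\nu^h e(\lambda h)$ as a finite combination of geometric series with denominators $1-\nu^2 e(2(\lambda+a/q))$, then control the outer sum via $\sum_q \mu(q)^2/\phi(q)^2 < \infty$. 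The main obstacle is uniformity in $\lambda$: when some $a/q$ happens to resonate with $-\lambda$ the corresponding denominator can be as small as $1/\log u$, and showing that such contributions are absorbed by the generous exponent in $|\lambda|^{-4}$ requires a careful accounting of the worst few $q$.
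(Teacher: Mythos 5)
Your treatment of $R_{\theta,\vartheta;0}(u)$ by Mellin inversion is essentially the paper's method (Cahen--Mellin integral, shift past the pole of $\zeta(s-\theta)\Gamma(s)$ at $s=1+\theta$), and your sketch of the $|\lambda|\ge(\log u)^{-1}$ bound for the $R$-sums via geometric-series/polylog manipulations is a reasonable elementary alternative to the paper's device of replacing $H$ by $H_\lambda=H/(1-2\pi i\lambda H)$ and bounding the vertical integral by $|H_\lambda|^4$. (Both routes have the same unstated uniformity caveat: $R_{\theta,\vartheta;\lambda}$ is $\tfrac12$-periodic in $\lambda$, so any bound decaying in $|\lambda|$ must really be read in terms of the distance from $2\lambda$ to the nearest integer; the paper glosses over this too, and in the application $\lambda=\ell\hat a$ avoids the problem.)

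The genuine gap is in your treatment of $S_0(u)$. Abel summation with $T(h)=\sum_{2\le t\le h,\,2\mid t}\fS_0(\{0,t\})=h/2+O(h^{1/2+\eps})$ gives
$$
S_0(u)=(1-\nu(u)^2)\sum_{2\mid h}T(h)\nu(u)^h
=\frac{\nu(u)^2}{1-\nu(u)^2}+O\big((\log u)^{1/2+\eps}\big),
$$
and the error term $O((\log u)^{1/2+\eps})$ completely swamps the secondary term $-\tfrac12\log\log u$ that the lemma asserts. You acknowledge this by saying the Ramanujan--Fourier expansion of $\fS_0(\{0,h\})$ "supplies the required additional structure," but you never say what that structure is or how it produces precisely $-\tfrac12\log\log u$; as written this is not a proof, it is a pointer to a hoped-for sharpening. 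The paper gets the $\log\log u$ term cleanly and unavoidably from the analytic structure of the Dirichlet series
$$
F(s)=\sum_{h\ge1}\frac{\fS(\{0,h\})}{h^s}
=\frac{\zeta(s)\zeta(s+1)}{\zeta(2s+2)}\prod_p\Big(1-\tfrac{1}{(p-1)^2}+\tfrac{2p}{(p-1)^2(p^{s+1}+1)}\Big),
$$
writing $T_0(u)=\frac{1}{2\pi i}\int_{(4)}F(s)\Gamma(s)H^s\,ds$, shifting to $\Re(s)=-\tfrac13$, and reading off the simple pole at $s=1$ (giving $\log u$) and the \emph{double} pole at $s=0$ from $\zeta(s+1)\Gamma(s)$ (giving $-\tfrac12\log\log u$, after subtracting $R_{0,0;0}(u)$ to pass from $\fS$ to $\fS_0$). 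The double pole at $s=0$ is the mechanism you are missing, and without it your proposal cannot reach the stated asymptotic for $S_0(u)$.
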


\begin{proof}
We adapt the proof of \cite[Proposition~2.1]{LO-S}.
As in Lemma~\ref{lem:Dom} we write
$\nu(u)^h=e^{-h/H}$ with $H\defeq -(\log\nu(u))^{-1}$.
We simplify the expressions $R_{\theta,\vartheta;\lambda}(u)$,
$S_\lambda(u)$ and $T_\lambda(u)$ by writing
$$
\nu(u)^h\e(\lambda h)=e^{-h/H_\lambda}
\qquad\text{with}\quad H_\lambda\defeq\frac{H}{1-2\pi i\lambda H}.
$$
Since $\Re(h/H_\lambda)=h/H>0$ for any positive integer $h$,
using the Cahen-Mellin integral we have
$$
R_{\theta,\vartheta;\lambda}(u)
=\sum_{\substack{h\ge 1\\2\,\mid\,h}}
h^\theta(\log h)^\vartheta e^{-h/H_\lambda}
=\frac{1}{2\pi i}\int_{4-i\infty}^{4+i\infty}
\bigg(\sum_{\substack{h\ge 1\\2\,\mid\,h}}
\frac{h^\theta(\log h)^\vartheta}{h^s}\bigg)
\Gamma(s)H_\lambda^s\,ds.
$$
In particular,
\begin{equation}
\label{eq:CahMel_0A}
R_{\theta,0;\lambda}(u)
=\frac{2^\theta}{2\pi i}
\int_{4-i\infty}^{4+i\infty}2^{-s}\zeta(s-\theta)
\Gamma(s)H_\lambda^s\,ds
\end{equation}
and
\begin{equation}
\label{eq:CahMel_1A}
R_{\theta,1;\lambda}(u)
=R_{\theta,0;\lambda}(u)\log2-\frac{2^\theta}{2\pi i}
\int_{4-i\infty}^{4+i\infty}2^{-s}
\zeta'(s-\theta)\Gamma(s)H_\lambda^s\,ds.
\end{equation}
When $\lambda\ne 0$ we have
\begin{align*}
\big|R_{\theta,0;\lambda}(u)\big|
&\le\frac{2^{\theta-4}|H_\lambda|^4}{2\pi}
\int_{-\infty}^\infty\big|\zeta(4-\theta+it)\Gamma(4+it)\big|\,dt\\
&\ll |H_\lambda|^4=\bigg(\frac{H^2}{1+4\pi^2\lambda^2H^2}\bigg)^2,
\end{align*}
hence the bound $R_{\theta,0;\lambda}(u)\ll \lambda^{-4}$ holds 
if $|\lambda|\ge(\log u)^{-1}$ since
$H\asymp\log u$ for $u\ge 3$.
In the case that $\lambda=0$, the stated estimate for
$R_{\theta,0;0}(u)$ is obtained by
shifting the line of integration in \eqref{eq:CahMel_0A}
to the line $\{\Re(s)=-\tfrac13\}$ (say), taking into
account the residues of the poles of the integrand
at $s=1+\theta$ and $s=0$.

Our estimates for $R_{\theta,1;\lambda}(u)$ are proved similarly,
using \eqref{eq:CahMel_1A} instead of \eqref{eq:CahMel_0A}
and taking into account that
$\zeta'(s-\theta)=(s-1-\theta)^{-1}+O(1)$ for $s$ near $1+\theta$.

Next, for all $\lambda\in\RR$ and $u\ge 3$, let
$$
T_\lambda(u)\defeq
\sum_{h\ge 1}\fS(\{0,h\})\,e^{-h/H_\lambda}.
$$
Since $\fS_0(\{0,h\})=\fS(\{0,h\})-1$ for all integers $h$,
and $\fS(\{0,h\})=0$ if $h$ is odd, it follows that
$$
S_\lambda(u)=T_\lambda(u)-R_{0,0;\lambda}(u)
=T_\lambda(u)-\tfrac12\log u+O(1).
$$
Hence, to complete the proof of the lemma, it suffices
to show that
$$
T_0(u)=\log u-\tfrac12\log\log u+O(1)\mand
T_\lambda(u)\ll\lambda^{-4}\text{~if~}|\lambda|\ge(\log u)^{-1}.
$$

As in the proof of \cite[Proposition~2.1]{LO-S}, we consider the
Dirichlet series
$$
F(s)\defeq\sum_{h\ge 1}\frac{\fS(\{0,h\})}{h^s},
$$
which can be expressed in the form
$$
F(s)=\frac{\zeta(s)\zeta(s+1)}{\zeta(2s+2)}
\prod_p\(1-\frac{1}{(p-1)^2}+\frac{2p}{(p-1)^2(p^{s+1}+1)}\),
$$
and the final product is analytic for $\Re(s)>-1$.
Using the Cahen-Mellin integral we have
\begin{equation}
\label{eq:CahMel2}
T_\lambda(u)
=\frac{1}{2\pi i}\int_{4-i\infty}^{4+i\infty}
F(s)\Gamma(s)H_\lambda^s\,ds.
\end{equation}
For $\lambda\ne 0$ we have
$$
\big|T_\lambda(u)\big|\le\frac{|H_\lambda|^4}{2\pi}
\int_{-\infty}^\infty\big|F(4+it)\Gamma(4+it)\big|\,dt
\ll |H_\lambda|^4=\bigg(\frac{H^2}{1+4\pi^2\lambda^2H^2}\bigg)^2
$$
hence $T_\lambda(u)\ll \lambda^{-4}$ holds provided that
$|\lambda|\ge(\log u)^{-1}$. For $\lambda=0$,
we shift the line of integration in \eqref{eq:CahMel2}
to the line $\{\Re(s)=-\tfrac13\}$ (say), taking into account
the double pole at $s=0$ and the simple pole at $s=1$.
This leads to the stated estimate for $T_0(u)$.
\end{proof}

We also need the following integral estimate (proof omitted).

\begin{lemma}
\label{lem:card}
For all $\lambda\in\RR$ and $x\ge 3$, let
$$
I_\lambda(x)\defeq\int_3^x\frac{\e(\lambda u)}{\nu(u)\log u}\,du.
$$
When $\lambda=0$ we have the estimate
$$
I_0(x)=\frac{x}{\log x}+O\bigg(\frac{x}{(\log x)^2}\bigg),
$$
whereas for any $\lambda\ne 0$ we have
$$
I_\lambda(x)\ll|\lambda|^{-1}.
$$
\end{lemma}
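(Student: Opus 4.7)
The plan hinges on the algebraic identity $\nu(u)\log u = (1 - 1/\log u)\log u = \log u - 1$, which on $[3,\infty)$ is strictly positive (since $\log 3 > 1$) and bounded away from zero. This rewrites the integrand as $\e(\lambda u)/(\log u - 1)$ with no singularity to contend with, and sets the stage for integration by parts in both cases.

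For $\lambda = 0$, I would integrate by parts taking $f(u) = 1/(\log u - 1)$ against $dg = du$, which yields
$$
I_0(x) = \frac{x}{\log x - 1} - \frac{3}{\log 3 - 1} + \int_3^x \frac{du}{u(\log u - 1)^2}.
$$
Under the substitution $v = \log u - 1$, $dv = du/u$, the remaining integral telescopes to $(\log 3 - 1)^{-1} - (\log x - 1)^{-1} = O(1)$. Expanding $x/(\log x - 1) = x/\log x + O(x/(\log x)^2)$ then delivers the claimed asymptotic.

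For $\lambda \ne 0$, I would integrate by parts in the opposite direction, exploiting the oscillation of the phase: set $dg = \e(\lambda u)\,du$ so that $g = \e(\lambda u)/(2\pi i\lambda)$, and $f = 1/(\log u - 1)$. This produces
$$
I_\lambda(x) = \frac{1}{2\pi i\lambda}\biggl[\frac{\e(\lambda u)}{\log u - 1}\biggr]_3^x + \frac{1}{2\pi i\lambda}\int_3^x \frac{\e(\lambda u)}{u(\log u - 1)^2}\,du.
$$
The boundary contribution is $O(|\lambda|^{-1})$ because $1/(\log u - 1)$ is bounded for $u \ge 3$, and the remaining integral is controlled in absolute value by the same $O(1)$ telescoping quantity computed in the $\lambda = 0$ case, so its contribution is also $O(|\lambda|^{-1})$.

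The argument is essentially routine; the only substantive point is to note that $\log 3 - 1 > 0$, so that the integrand has no singularity on $[3,\infty)$ and every boundary evaluation and auxiliary integral is finite. Once this is observed, both estimates reduce to standard integration-by-parts manipulations, and there is no genuine obstacle to pushing them through.
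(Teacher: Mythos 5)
The paper omits the proof, so there is no internal baseline to compare against, but your approach (integration by parts, after observing $\nu(u)\log u = \log u - 1 > 0$ on $[3,\infty)$) is the natural one and the $\lambda \ne 0$ case is handled correctly. However, there is a computational slip in the $\lambda = 0$ case. After integrating by parts with $f(u) = 1/(\log u - 1)$ and $g(u) = u$, the term $-\int_3^x g\,df$ equals
$$
\int_3^x u\cdot\frac{du}{u(\log u - 1)^2} = \int_3^x \frac{du}{(\log u - 1)^2},
$$
because the factor $g=u$ cancels the $1/u$ coming from $df$. You instead wrote $\int_3^x \frac{du}{u(\log u - 1)^2}$, which is what appears in the $\lambda\ne 0$ case (where $g$ carries no factor of $u$); the subsequent substitution $v=\log u - 1$, $dv = du/u$, and the resulting $O(1)$ bound therefore do not apply. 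The true remaining integral $\int_3^x (\log u - 1)^{-2}\,du$ is \emph{not} $O(1)$: splitting at $\sqrt{x}$ shows it is $O\bigl(x/(\log x)^2\bigr)$, which grows. Fortunately this is exactly within the permitted error term, so the lemma is still recovered: combining $x/(\log x - 1) = x/\log x + O\bigl(x/(\log x)^2\bigr)$ with the corrected $O\bigl(x/(\log x)^2\bigr)$ bound on the remaining integral gives the claimed asymptotic. In short, the method is sound and the conclusion stands, but the intermediate claim that the remaining integral is $O(1)$ is false and should be replaced by an $O\bigl(x/(\log x)^2\bigr)$ estimate for the integral without the spurious $1/u$.
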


\section{Proof of Theorem~\ref{thm:main}}

For every even integer $h\ge 2$ we denote
$$
\pi_h(x;\cB,\hat\cB)\defeq\big|\{p\le x:p\in\cB,~p^\sharp\in\hat\cB
\text{~and~}\delta_p=h\}\big|
=\sum_{n\le x}\ind{\cB}(n)\ind{\hat\cB}(n+h)f_h(n),
$$
where
$$
f_h(n)\defeq\ind{\PP}(n)\ind{\PP}(n+h)
\prod_{0<t<h}\big(1-\ind{\PP}(n+t)\big)
=\begin{cases}
1&\quad\hbox{if $n=p\in\PP$ and $\delta_p=h$},\\
0&\quad\hbox{otherwise}.
\end{cases}
$$
Clearly,
\begin{equation}
\label{eq:piBBest}
\pi(x;\cB,\hat\cB)
=\sum_{\substack{h\le(\log x)^3\\2\,\mid\,h}}\pi_h(x;\cB,\hat\cB)
+O\(\frac{x}{(\log x)^3}\).
\end{equation}

Fixing an even integer $h\in[1,(\log x)^3]$ for the moment,
our initial goal is to express $\pi_h(x;\cB,\hat\cB)$
in terms of the function
$$
S_h(x)\defeq\sum_{n\le x}f_h(n)
$$
recently introduced by Lemke Oliver and
Soundararajan~\cite[Equation~(2.5)]{LO-S}.
In view of \eqref{eq:ind-Beatty} we can write
\begin{equation}
\label{eq:paris}
\pi_h(x;\cB,\hat\cB)=\sum_{n\le x}
\psi_a(an+b)\psi_{\hat a}(\hat a(n+h)+\hat b)f_h(n).
\end{equation}
According to a classical result of Vinogradov (see~\cite[Chapter~I,
Lemma~12]{Vin}), for any $\Delta$ such that
$$
0 < \Delta < \tfrac18 \mand \Delta\le
\tfrac12\min\{a,1-a\}
$$
there is a real-valued function $\Psi_a$ with the following
properties:
\begin{itemize}
\item[$(i)$~~] $\Psi_a$ is periodic with period one;

\item[$(ii)$~~] $0 \le\Psi_a(t)\le 1$ for all $t\in\RR$;

\item[$(iii)$~~] $\Psi_a(t)=\psi_a(t)$ if $\Delta\le \{t\}\le
a-\Delta$ or if $a+\Delta\le \{t\}\le 1-\Delta$;

\item[$(iv)$~~] $\Psi_a$ is represented by a Fourier series
$$
\Psi_a(t)=\sum_{k\in\ZZ}g_a(k)\e(kt),
$$
where $g_a(0)=a$, and the Fourier coefficients
satisfy the uniform bound
\begin{equation}
\label{eq:coeffbounds}
|g_a(k)|\ll\min\big\{|k|^{-1},|k|^{-2}\Delta^{-1}\big\}
\qquad(k\ne 0).
\end{equation}
\end{itemize}
For convenience, we denote
$$
\cI_a\defeq[0,\Delta)\cup(a-\Delta,a+\Delta) \cup(1-\Delta,1),
$$
so that $\Psi_a(t)=\psi_a(t)$ whenever $\{t\}\not\in\cI_a$.
Defining $\Psi_{\hat a}$ and $\cI_{\hat a}$ similarly with $\hat a$
in place of $a$, and taking into account
the properties $(i)$--$(iii)$,
from \eqref{eq:paris} we deduce that
\begin{equation}
\label{eq:basic_estimate}
\pi_h(x;\cB,\hat\cB)=\sum_{n\le x}\Psi_{a}(a n+b)
\Psi_{\hat a}(\hat a(n+h)+\hat b)f_h(n)+O(V(x)),
\end{equation}
where $V(x)$ is the number of positive integers $n\le x$ for which
$$
\{a n+b\}\in\cI_a\qquad\text{or}\qquad
\{\hat a(n+h)+\hat b\}\in\cI_{\hat a}.
$$
Since $\cI_a$ and $\cI_{\hat a}$ are unions of intervals
with overall measure $4\Delta$, it follows from the
definition \eqref{eq:descr_defn} and
Lemma~\ref{lem:discr_with_type} that
\begin{equation}
\label{eq:bound V(I,x)}
V(x)\ll\Delta x+x^{1-1/\tau+o(1)}\qquad(x\to\infty).
\end{equation}

Now let $K\ge\Delta^{-1}$ be a large real number,
and let $\Psi_{a,K}$ be the trigonometric polynomial given by
$$
\Psi_{a,K}(t)\defeq\sum_{|k|\le K}g_a(k)\e(kt).
$$
Using \eqref{eq:coeffbounds} it is clear that the estimate
\begin{equation}
\label{eq:PKP} \Psi_a(t)=\Psi_{a,K}(t)+O(K^{-1}\Delta^{-1})
\end{equation}
holds uniformly for all $t\in\RR$.
Defining $\Psi_{\hat a,K}$ in a similar way, combining
\eqref{eq:PKP} with \eqref{eq:basic_estimate}, and taking into
account \eqref{eq:bound V(I,x)}, we derive the estimate
$$
\pi_h(x;\cB,\hat\cB)=\Sigma_h
+O\big(\Delta x+x^{1-1/\tau+\eps}+K^{-1}\Delta^{-1}x\big),
$$
where
\begin{align*}
\Sigma_h&\defeq\sum_{n\le x}\Psi_{a,K}(a n+b)
\Psi_{\hat a,K}(\hat a(n+h)+\hat b)f_h(n)\\
&=\sum_{n\le x}\sum_{|k|,|\ell |\le K}
g_a(k)\e(k(a n+b))
g_{\hat a}(\ell)\e(\ell (\hat a(n+h)+\hat b))f_h(n)\\
&=\sum_{|k|,|\ell |\le K}g_a(k)\e(kb)
g_{\hat a}(\ell)\e(\ell\hat b)\cdot\e(\ell\hat ah)
\sum_{n\le x}\e((ka+\ell\hat a)n)f_h(n).
\end{align*}
Therefore
\begin{align}
\pi_h(x;\cB,\hat\cB)&=\sum_{|k|,|\ell |\le K}g_a(k)\e(kb)
g_{\hat a}(\ell)\e(\ell\hat b)\cdot\e(\ell\hat ah)
\int_{3^-}^x\e((ka+\ell\hat a)u)\,d(S_h(u))\nonumber\\
\label{eq:milk}
&\qquad+O\big(\Delta x+x^{1-1/\tau+\eps}+K^{-1}\Delta^{-1}x\big),
\end{align}
which completes our initial goal of
expressing $\pi_h(x;\cB,\hat\cB)$
in terms of the function $S_h$.
To proceed further, it is useful to recall
certain aspects of the analysis
of $S_h$ that is carried out in \cite{LO-S}.
First, writing $\tind{\PP}(n)\defeq\ind{\PP}(n)-1/\log n$, up to
an error term of size 
$O(x^{1/2+\eps})$ the quantity $S_h(x)$ is equal to
\begin{align*}
&\sum_{n\le x}\(\tind{\PP}(n)+\frac{1}{\log n}\)
\(\tind{\PP}(n+h)+\frac{1}{\log n}\)
\prod_{0<t<h}\(1-\frac{1}{\log n}-\tind{\PP}(n+t)\)\\
&\quad=\sum_{\cA\subseteq\{0,h\}}\sum_{\cT\subseteq[1,h-1]}
(-1)^{|\cT|}\sum_{n\le x}\(\frac{1}{\log n}\)^{2-|\cA|}
\(1-\frac{1}{\log n}\)^{h-1-|\cT|}
\prod_{t\in\cA\cup\cT}\tind{\PP}(n+t);
\end{align*}
see~\cite[Equations~(2.5) and (2.6)]{LO-S}.
By the modified Hardy-Littlewood conjecture \eqref{eq:modHL}
the estimate
\begin{align*}
\sum_{n\le x}(\log n)^{-c}
\prod_{t\in\cH}\tind{\PP}(n+t)
&=\int_{3^-}^x(\log u)^{-c}\,d\(\,\sum_{n\le u}
\prod_{t\in\cH}\tind{\PP}(n+t)\)\\
&=\fS_0(\cH)\int_3^x(\log u)^{-c-|\cH|}\,du
+O(x^{1/2+\eps})
\end{align*}
holds uniformly for any constant $c>0$;
consequently, up to an error term of size
$O(x^{1/2+\eps})$ the quantity $S_h(x)$ is equal to
$$
\sum_{\cA\subseteq\{0,h\}}\sum_{\cT\subseteq[1,h-1]}
(-1)^{|\cT|} \fS_0(\cA\cup\cT)
\int_3^x(\log u)^{-2-|\cT|}
\nu(u)^{h-1-|\cT|}\,du,
$$
where
$$
\nu(u)\defeq 1-\frac1{\log u}\qquad(u>1)
$$
(note that $\nu(u)$ is the same as $\alpha(u)$
in the notation of \cite{LO-S}).
For every integer $L\ge 0$ we denote
$$
\cD_{h,L}(u)\defeq
\mathop{\sum_{\cA\subseteq\{0,h\}}\sum_{\cT\subseteq[1,h-1]}}
\limits_{(|\cA|+|\cT|=L)}
(-1)^{|\cT|} \fS_0(\cA\cup\cT)
(\nu(u)\log u)^{-|\cT|}
\nu(u)^h,
$$
so that
$$
S_h(x)=\sum_{L=0}^{h+1}\int_3^x\nu(u)^{-1}(\log u)^{-2}
\cD_{h,L}(u)\,du+O(x^{1/2+\eps}).
$$
We now combine this relation with \eqref{eq:milk},
sum over the even natural numbers $h\le(\log x)^3$,
and apply \eqref{eq:piBBest} to deduce that the
quantity $\pi(x;\cB,\hat\cB)$ is equal to
\begin{align*}
&\sum_{\substack{h\le(\log x)^3\\2\,\mid\,h}}\sum_{L=0}^{h+1}
\sum_{|k|,|\ell |\le K}g_a(k)\e(kb)
g_{\hat a}(\ell)\e(\ell\hat b)\cdot\e(\ell\hat ah)
\int_3^x\frac{\e((ka+\ell\hat a)u)}{\nu(u)(\log u)^2}
\cD_{h,L}(u)\,du
\end{align*}
up to an error term of size
$$
\ll\frac{x}{(\log x)^3}+
\big(\Delta x+x^{1-1/\tau+\eps}
+K^{-1}\Delta^{-1}x\big)(\log x)^3.
$$
Choosing $\Delta\defeq(\log x)^{-6}$ and $K\defeq(\log x)^{12}$
the combined error is $O(x/(\log x)^3)$, which is acceptable.

Next, arguing as in
\cite{LO-S} and noting that
$$
\sum_{|k|,|\ell |\le K}|g_a(k)g_{\hat a}(\ell)|\ll(\log\log x)^2,
$$
one sees that the contribution to $\pi(x;\cB,\hat\cB)$ coming from
terms with $L\ge 3$ does not exceed $O(x/(\log x)^{5/2})$.
Since $\cD_{h,1}$ is identically zero (as $\fS_0$
vanishes on singleton sets), 
this leaves only the terms with $L=0$ or $L=2$.
The function $\cD_{h,2}$ splits naturally into four pieces
according to whether $\cA=\varnothing$, $\{0\}$, $\{h\}$ or
$\{0,h\}$. Consequently, up to $O(x/(\log x)^{5/2})$ we can
express the quantity $\pi(x;\cB,\hat\cB)$ as
\begin{equation}
\label{eq:vegetables}
\sum_{j=1}^5
\sum_{|k|,|\ell |\le K}g_a(k)\e(kb)
g_{\hat a}(\ell)\e(\ell\hat b)
\int_3^x\frac{\e((ka+\ell\hat a)u)}{\nu(u)(\log u)^2}
\cF_{j,\ell}(u)\,du,
\end{equation}
where (taking into account Lemma~\ref{lem:Dom}) we have written
$$
\sum_{\substack{h\le(\log x)^3\\2\,\mid\,h}}
\e(\ell\hat ah)\cD_{h,L}(u)
=\sum_{j=1}^5\cF_{j,\ell}(u)+O(x^{-1})
$$
with
\begin{align*}
\cF_{1,\ell}(u)
&\defeq\sum_{\substack{h\ge 1\\2\,\mid\,h}}
\nu(u)^h\e(\ell\hat ah),\\
\cF_{2,\ell}(u)
&\defeq\sum_{\substack{h\ge 1\\2\,\mid\,h}}
\fS_0(\{0,h\})\nu(u)^h\e(\ell\hat ah),\\
\cF_{3,\ell}(u)
&\defeq\frac{(-1)}{\nu(u)\log u}
\sum_{\substack{h\ge 1\\2\,\mid\,h}}
\sum_{\substack{1\le t\le h-1}}
\fS_0(\{0,t\})\nu(u)^h\e(\ell\hat ah),\\
\cF_{4,\ell}(u)
&\defeq\frac{(-1)}{\nu(u)\log u}
\sum_{\substack{h\ge 1\\2\,\mid\,h}}
\sum_{\substack{1\le t\le h-1}}
\fS_0(\{t,h\})\nu(u)^h\e(\ell\hat ah),\\
\cF_{5,\ell}(u)
&\defeq\frac{1}{(\nu(u)\log u)^2}
\sum_{\substack{h\ge 1\\2\,\mid\,h}}
\sum_{\substack{1\le t_1<t_2\le h-1}}
\fS_0(\{t_1,t_2\})\nu(u)^h\e(\ell\hat ah).
\end{align*}

First, we show that certain terms in \eqref{eq:vegetables}
make a negligible contribution that does not exceed
$O(x/(\log x)^{3/2-\eps})$.

For any $\ell\ne 0$, using Lemma~\ref{lem:RST} with
$\lambda=\ell\hat a$ we have
$$
\cF_{1,\ell}(u)=R_{0,0;\ell\hat a}(u)\ll\ell^{-4}
$$
provided that $|\ell\hat a|\ge(\log u)^{-1}$, and for this
it suffices that $u\ge \exp(\hat\alpha)$.  Thus,
$$
\int_3^x\frac{\e((ka+\ell\hat a)u)}{\nu(u)(\log u)^2}
\cF_{1,\ell}(u)\,du\ll
1+\ell^{-4}\frac{x}{(\log x)^2}.
$$
In view of \eqref{eq:coeffbounds}, the contribution to
\eqref{eq:vegetables} from terms with $j=1$ and $\ell\ne 0$ is
$$
\ll\sum_{\substack{|k|,|\ell |\le K\\\ell\ne 0}}
|g_a(k)|\cdot|\ell|^{-1}
\bigg(1+\ell^{-4}\frac{x}{(\log x)^2}\bigg)
\ll\frac{x\log\log x}{(\log x)^2}
\ll\frac{x}{(\log x)^{3/2-\eps}}.
$$

Similarly, for $\ell\ne 0$ and $u\ge \exp(\hat\alpha)$
we have $\cF_{2,\ell}(u)=S_{\ell\hat a}(u)\ll\ell^{-4}$
by Lemma~\ref{lem:RST}, so the contribution to
\eqref{eq:vegetables} from terms with $j=2$ and $\ell\ne 0$ is
also $O(x/(\log x)^{3/2-\eps})$.

For any $\ell\in\ZZ$, by Lemma~\ref{lem:G0ests} and
Lemma~\ref{lem:RST} we have
$$
\max\big\{\big|\cF_{3,\ell}(u)\big|,\big|\cF_{4,\ell}(u)\big|\big\}
\ll\frac{1}{\log u}\sum_{\substack{h\ge 1\\2\,\mid\,h}}
h^{1/2+\eps/2}\nu(u)^h
\ll (\log u)^{1/2+\eps/2},
$$
hence for $j=3,4$ we see that
$$
\int_3^x\frac{\e((ka+\ell\hat a)u)}{\nu(u)(\log u)^2}
\cF_{j,\ell}(u)\,du\ll\frac{x}{(\log x)^{3/2-\eps/2}}.
$$
By \eqref{eq:coeffbounds}, it follows that
the contribution to
\eqref{eq:vegetables} from terms with $j=3,4$ is
$$
\ll\frac{x}{(\log x)^{3/2-\eps/2}}
\sum_{|k|,|\ell |\le K}|g_a(k)g_{\hat a}(\ell)|
\ll\frac{x(\log\log x)^2}{(\log x)^{3/2-\eps/2}}
\ll\frac{x}{(\log x)^{3/2-\eps}}.
$$

Finally, for any $\ell\in\ZZ$ and  $u\ge \exp(\hat\alpha)$,
by Lemma~\ref{lem:G0ests} and Lemma~\ref{lem:RST} we have
\begin{align*}
\cF_{5,\ell}(u)
&=\frac{1}{(\nu(u)\log u)^2}
\sum_{\substack{h\ge 1\\2\,\mid\,h}}
\(-\tfrac12 h\log h+\tfrac12Ah+O(h^{1/2+\eps/2})\)
\nu(u)^h\e(\ell\hat ah)\\
&=\frac{-\tfrac12R_{1,1;\ell\hat a}(u)
+\tfrac12AR_{1,0;\ell\hat a}(u)
+O(R_{1/2+\eps/2,0;0}(u))}{(\nu(u)\log u)^2}\\
&\ll\frac{\lambda^{-4}+(\log u)^{3/2+\eps/2}}{(\log u)^2},
\end{align*}
and arguing as before we see that the
contribution to \eqref{eq:vegetables} coming
from terms with $j=5$ does not exceed $O(x/(\log x)^{3/2-\eps})$.

Applying the preceding bounds to \eqref{eq:vegetables}
we see that, up to $O(x/(\log x)^{3/2-\eps})$,
the quantity $\pi(x;\cB,\hat\cB)$ is equal to
$$
\hat a\sum_{j=1,2}
\sum_{|k|\le K}g_a(k)\e(kb)
\int_3^x\frac{\e(kau)}{\nu(u)(\log u)^2}
\cF_{j,0}(u)\,du,
$$
where we have used the fact that $g_{\hat a}(0)=\hat a$.
By Lemma~\ref{lem:RST} we have
$$
\cF_{1,0}(u)
=\sum_{\substack{h\ge 1\\2\,\mid\,h}}\nu(u)^h
=R_{0,0;0}(u)=\tfrac12\log u+O(1)
$$
and
$$
\cF_{2,0}(u)
=\sum_{\substack{h\ge 1\\2\,\mid\,h}}
\fS_0(\{0,h\})\nu(u)^h
=S_0(u)=\tfrac12\log u-\tfrac12\log\log u+O(1);
$$
therefore,
$$
\int_3^x\frac{\e(kau)}{\nu(u)(\log u)^2}
\cF_{j,0}(u)\,du
=\frac12\int_3^x\frac{\e(kau)}{\nu(u)\log u}\,du
+O\bigg(\frac{x\log\log x}{(\log x)^2}\bigg)\qquad(j=1,2).
$$
Consequently, up to $O(x/(\log x)^{3/2-\eps})$
we can express the quantity $\pi(x;\cB,\hat\cB)$ as
\begin{equation}
\label{eq:grains}
\hat a\sum_{|k|\le K}g_a(k)\e(kb)
\int_3^x\frac{\e(kau)}{\nu(u)\log u}\,du.
\end{equation}
To complete the proof of Theorem~\ref{thm:main},
we apply Lemma~\ref{lem:card}, which shows that the term $k=0$
in \eqref{eq:grains} contributes
$$
a\hat a\,\frac{x}{\log x}+O\bigg(\frac{x}{(\log x)^2}\bigg)
=(\alpha\hat\alpha)^{-1}\pi(x)+O\bigg(\frac{x}{(\log x)^2}\bigg)
$$
to the quantity $\pi(x;\cB,\hat\cB)$ (and thus accounts
for the main term), whereas the terms in
\eqref{eq:grains} with $k\ne 0$ contribute altogether
only a bounded amount.

\bigskip

\noindent{\bf Acknowledgement.} The first author was supported in part by
a grant from the University of Missouri Research Board.

\end{document}